\documentclass{article}%
\usepackage{amssymb}
\usepackage{amsmath}
\usepackage{amsfonts}
\usepackage{graphicx}%
\setcounter{MaxMatrixCols}{30}
\newtheorem{theorem}{Theorem} [section]

\newtheorem{definition}[theorem]{Definition}
\newtheorem{example}[theorem]{Example}

\newtheorem{proposition}[theorem]{Proposition}

\newenvironment{proof}[1][Proof]{\noindent\textbf{#1.} }{\ \rule{0.5em}{0.5em}}
\begin{document}

\author{Yulia Kempner$^{1}$and Vadim E. Levit$^{1,2}$\\$^{1}
$Dept. of Computer Science \\\ Holon Institute of Technology \\\ yuliak@hit.ac.il\\$^{2}
$Dept. of Computer Science and Mathematics\\\ Ariel University Center of Samaria\\\ levitv@ariel.ac.il}
\title{Duality between quasi-concave functions and monotone linkage functions}
\date {}
\maketitle

\begin{abstract}
A function $F$ defined on all subsets of a finite ground set $E$ is
\textit{quasi-concave} if $F(X\cup Y)\geq\min\{F(X),F(Y)\}$ for all
$X,Y\subset E$. Quasi-concave functions arise in many fields of mathematics
and computer science such as social choice, theory of graph, data mining,
clustering and other fields.

The maximization of quasi-concave function takes, in general,
exponential time. However, if a quasi-concave function is defined by
associated monotone linkage function then it can be optimized by the
greedy type algorithm in a polynomial time.

Quasi-concave functions defined as minimum values of monotone
linkage functions were considered on antimatroids, where the correspondence
between quasi-concave and bottleneck functions was shown \cite{EJC}. The goal
of this paper is to analyze quasi-concave functions on different families of
sets and to investigate their relationships with monotone linkage functions.

\end{abstract}

\section{Preliminaries}

Many combinatorial optimization problems can be formulated as: for a given
\textit{set system }over $E$ (i.e., for a pair $(E,\mathcal{F})$ where
$\mathcal{F}\subseteq2^{E}$ is a family of feasible subsets of finite set
$E$), and for a given function $F:\mathcal{F}\rightarrow\mathbf{R}$, find an
element of $\mathcal{F}$ for which the value of the function $F$ is extremal.
In general, this optimization problem is \textbf{NP}-hard, but for some
specific functions and set systems the problem may be solved in polynomial
time. For instance, modular cost functions can be optimized over matroids by
greedy algorithms \cite{Edmonds}, and bottleneck functions can be maximized
over greedoids \cite{Greedoids}. Another example is about set functions
defined as minimum values of monotone linkage functions. These functions are
known as quasi-concave set functions. Such set functions can be maximized by a
greedy type algorithm over the family of all subsets of $E$ \cite{KMM}%
,\cite{Mirkin},\cite{Mullat},\cite{Zaks}, over antimatroids and convex
geometries \cite{EJC},\cite{Malta},\cite{Mirkin2}, join-semilattices
\cite{MuchnikShvar} and meet-semilattices \cite{DAM}.

Originally \cite{Malishevski}, these functions were defined on the Boolean
$2^{E}$:
\begin{equation}
\text{for\ each}\ X,Y\subset E,\ F(X\cup Y)\geq\min\{F(X),F(Y)\}.
\label{Mal_def}%
\end{equation}

In this work we extend this definition to set systems that are not necessarily
closed under union.

Let $E$ be a finite set, and a pair $(E,\mathcal{F})$ be a set
system\textit{\ }over $E$.

\begin{definition}
A minimal feasible subset of $E$ that includes a set $X$ is called a
\textit{cover of }$X$.
\end{definition}

\smallskip We will denote by \-$\mathcal{C}(X)$ the family of covers of $X$.

\begin{definition}
A function $F$ defined on a set system $(E,\mathcal{F})$ is quasi-concave if
for\ each\ $X,Y\in\mathcal{F}$,\ and $Z\in\mathcal{C}(X\cup Y)$,
\begin{equation}
F(Z)\geq\min\{F(X),F(Y)\}. \label{q_con}%
\end{equation}

\end{definition}

If a set system is closed under union, then the family of covers
$\mathcal{C}(X\cup Y)$ contains the unique set $X\cup Y$, and the inequality
(\ref{q_con}) coincides with the original inequality (\ref{Mal_def}).

Here we give definitions of some set properties that are discussed in the
following section. We will use $X\cup x$ for $X\cup\{x\}$, and $X-x$ for
$X-\{x\}$.

\begin{definition}
A non-empty set system $(E,\mathcal{F})$ is called \textit{accessible }if for
each non-empty $X\in\mathcal{F}$, there is an $x\in X$ such that
$X-x\in\mathcal{F}$.
\end{definition}

For each non-empty set system $(E,\mathcal{F})$ accessibility implies that
$\varnothing\in\mathcal{F}$.

\begin{definition}
A \textit{closure operator }, $\tau:2^{E}\rightarrow2^{E}$, is a map
satisfying the closure axioms:

C1: $X\subseteq\tau(X)$

C2: $X\subseteq Y\Rightarrow\tau(X)\subseteq\tau(Y)$

C3: $\tau(\tau(X))=\tau(X)$.
\end{definition}

\begin{definition}
The set system $(E,\mathcal{F})$ is a \textit{closure space} if it satisfies
the following properties

(1) $\varnothing\in\mathcal{F}$, $E\in\mathcal{F}$

(2) $X,Y\in\mathcal{F}$ implies $X\cap Y\in\mathcal{F}$.
\end{definition}

Let a set system $(E,\mathcal{F})$ be a closure space, then the operator
\begin{equation}
\tau(A)=\cap\{X:A\subseteq X\text{ }and\text{ }X\in\mathcal{F}\}
\label{cl_oper}%
\end{equation}
is a closure operator.

Convex geometries were introduced by Edelman and Jamison \cite{Edelman} as a
combinatorial abstraction of "convexity".

\begin{definition}
\cite{Greedoids} The closure space $(E,\mathcal{F})$ is a \textit{convex
geometry} if the family $\mathcal{F}$ satisfies the following property
\begin{equation}
X\in\mathcal{F}-E\ implies\ X\cup x\in\mathcal{F}\ for\ some\ x\in
E-X.\label{up_chain}%
\end{equation}

\end{definition}

It is easy to see that property (\ref{up_chain}) is dual to accessibility.
Then, we will call it \textit{up-accessibility}. If in each non-empty
accessible set system one can reach the empty set $\varnothing$ from any
feasible set $X\in\mathcal{F}$ by moving down, so in each non-empty
up-accessible set system $(E,\mathcal{F})$ the set $E$ may be reached by
moving up.

It is clear that a complement set system $(E,\overline{\mathcal{F}}\ )$
(system of complements), where $\overline{\mathcal{F}}=\{X\subseteq
E:E-X\subseteq\mathcal{F}\}$, is up-accessible\textit{\ }if and only if the
set system $(E,\mathcal{F})$ is accessible.

In fact, accessibility means that for all $X\in\mathcal{F}$ there exists a
chain $\varnothing=X_{0}\subset X_{1}\subset...\subset X_{k}=X$ such that
$X_{i}=X_{i-1}\cup x_{i}$, $x_{i}\in X_{i}-X_{i-1}$ and $X_{i}\in\mathcal{F}$
for $0\leq i\leq k$, and up-accessibility implies the existence of the
corresponding chain $X=X_{0}\subset X_{1}\subset...\subset X_{k}=E$. Consider
a set family for which this \textit{chain property} holds for each pair of
sets $X\subset Y$.

\begin{definition}
A set system $(E,\mathcal{F})$ satisfies the \textit{chain property }if for
all $X,Y\in\mathcal{F}$, and $X\subset Y$, there exists an $y\in Y-X$ such
that $Y-y\in\mathcal{F}$. We call the system a chain system.
\end{definition}

In other words, a set system $(E,\mathcal{F})$ satisfies the chain
property\textit{\ }if for all $X,Y\in\mathcal{F}$, and $X\subset Y$, there
exists an $y\in Y-X$ such that $X\cup y\in\mathcal{F}$.

\begin{proposition}
$(E,\overline{\mathcal{F}}\ )$ is a chain system if and only if
$(E,\mathcal{F})$ is a chain system as well.
\end{proposition}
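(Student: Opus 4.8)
The plan is to use the involution $X\mapsto E-X$ on $2^{E}$, which is order-reversing and, by the very definition of $\overline{\mathcal{F}}$, restricts to a bijection between $\mathcal{F}$ and $\overline{\mathcal{F}}$. Since this map is its own inverse and $\overline{\overline{\mathcal{F}}}=\mathcal{F}$, it suffices to prove a single implication, say that if $(E,\mathcal{F})$ is a chain system then $(E,\overline{\mathcal{F}}\,)$ is a chain system; the converse then follows by applying this implication to $\overline{\mathcal{F}}$ in the role of $\mathcal{F}$.

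So assume $(E,\mathcal{F})$ is a chain system and pick $A,B\in\overline{\mathcal{F}}$ with $A\subset B$; I must exhibit a $b\in B-A$ with $B-b\in\overline{\mathcal{F}}$. I would now translate everything through the complement. The memberships $A,B\in\overline{\mathcal{F}}$ say exactly that $E-A,E-B\in\mathcal{F}$; the inclusion $A\subset B$ is equivalent to $E-B\subset E-A$; the condition $b\in B-A$ is equivalent to $b\in(E-A)-(E-B)$; and $B-b\in\overline{\mathcal{F}}$ is equivalent to $E-(B-b)=(E-B)\cup b\in\mathcal{F}$. Hence the desired conclusion is precisely: there exists $b\in(E-A)-(E-B)$ with $(E-B)\cup b\in\mathcal{F}$. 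This is exactly the chain property for $\mathcal{F}$, in the reformulation stated just after the definition (``there exists $y\in Y-X$ with $X\cup y\in\mathcal{F}$''), applied to the pair $E-B\subset E-A$ of sets of $\mathcal{F}$.

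The one point requiring a little care is that I invoked the ``$X\cup y\in\mathcal{F}$'' version of the chain property rather than the ``$Y-y\in\mathcal{F}$'' version used in the formal definition; the excerpt asserts the two are equivalent, so this is legitimate. If one wished to avoid that equivalence, the same argument goes through with a short descending iteration: starting from $E-A\in\mathcal{F}$ one repeatedly deletes a single element while staying in $\mathcal{F}$ (possible by the chain property, since $E-B$ remains below) until reaching $E-B$, and the penultimate set of this chain has the form $(E-B)\cup b\in\mathcal{F}$ with $b\in(E-A)-(E-B)$. I do not expect any genuine difficulty here: the content of the proposition is simply that the chain property is self-dual under set complementation, and the only thing to get right is the bookkeeping of complements.
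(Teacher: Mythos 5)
Your proof is correct and follows essentially the same route as the paper: pass to complements, observe that $A\subset B$ in $\overline{\mathcal{F}}$ corresponds to $E-B\subset E-A$ in $\mathcal{F}$, and invoke the ``$X\cup y\in\mathcal{F}$'' form of the chain property, exactly as the paper does (the paper likewise uses that reformulation without further comment). Your extra remark justifying the equivalence of the two forms of the chain property via a descending chain is a sensible precaution but not a departure from the paper's argument.
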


\begin{proof}
Let $X,Y\in\overline{\mathcal{F}}$, and $X\subset Y$, then there exist
$\overline{X}=E-X$ and $\overline{Y}$ such that $\overline{Y}\subset
\overline{X}\overline{\text{,}}$ and there is $y\in\overline{X}-\overline{Y} $
such that $\overline{Y}\cup y\in\mathcal{F}$. Since $\overline{X}-\overline
{Y}=\overline{X}\cap Y=Y-X$, we have $y\in Y-X$. In addition, $\overline
{Y}\cup y\in\mathcal{F}$ implies $Y-y\in\overline{\mathcal{F}}$, that
completes the proof.
\end{proof}

Consider a relation between accessibility and the chain property. If
$\varnothing\in\mathcal{F}$, then accessibility follows from the chain
property. In general case, there are accessible set systems that do not
satisfy the chain property (for example, consider $E=\{1,2,3\}$ and
$\mathcal{F}=\{\varnothing,\{1\},\{2\},\{2,3\},\{1,2,3\}\}$) and vice versa,
it is possible to construct a set system, that satisfies the chain property
and it is not an accessible (for example, let now $\mathcal{F}%
=\{\{1\},\{3\},\{1,2\},\{2,3\},\{1,2,3\}\}$). In fact, if we have an
accessible set system satisfying the chain property, then the same system but
without the empty set (or without all subsets of cardinality less then some
$k$) is not accessible, but satisfies the chain property. The analogy
statements are correct for up-accessibility.

Examples of chain systems include convex geometries (see proposition
\ref{Pr_chain_convex}) and their complement systems called antimatroids,
hereditary systems (matroids, matchings, cliques, independent sets of a graph).

Consider another example of a chain system.

\begin{example}
For a graph $G=(V,E)$, the set system $(V,\mathcal{S})$ given by
\[
\mathcal{S}=\{A\subseteq V:(A,E(A))\ \text{is\ a\ connected\ subgraph\ of}%
\ G\},
\]
is a chain system. The example is illustrated in Figure \ref{fig}.
\end{example}

\begin{figure}[ptbh]
\setlength{\unitlength}{1.1cm}
\par
\begin{picture}(7,5)(0,-2.9)\thicklines
\multiput(0.3,0)(2,0){2}{\circle*{0.3}} \put(1.3,2){\circle*{0.3}}
\put(1.3,-2){\circle*{0.3}} \put(0,0){\makebox(0,0){1}}
\put(2.6,0){\makebox(0,0){3}} \put(1.3,2.3){\makebox(0,0){2}}
\put(1.3,-2.3){\makebox(0,0){4}} \put(0.3,0){\line(1,2){1}}
\put(0.3,0){\line(1,-2){1}} \put(1.3,2){\line(1,-2){1}}
\put(1.3,-2){\line(1,2){1}} \put(1.3,-3){\makebox(0,0){(a)}}
\multiput(4,0)(2,0){4}{\circle*{0.2}}
\multiput(4,1)(2,0){4}{\circle*{0.2}}
\multiput(4,-1)(2,0){4}{\circle*{0.2}} \put(7,2){\circle*{0.2}}
\put(7,-2){\circle*{0.2}} \multiput(4,-1)(2,0){4}{\line(0,1){2}}
\put(4,0){\line(6,1){6}} \put(4,-1){\line(6,1){6}}
\put(4,0){\line(2,-1){2}} \put(8,1){\line(2,-1){2}}
\put(4,1){\line(2,-1){4}} \put(6,1){\line(2,-1){4}}
\put(4,1){\line(3,1){3}} \put(6,1){\line(1,1){1}}
\put(8,1){\line(-1,1){1}} \put(10,1){\line(-3,1){3}}
\put(4,-1){\line(3,-1){3}} \put(6,-1){\line(1,-1){1}}
\put(8,-1){\line(-1,-1){1}} \put(10,-1){\line(-3,-1){3}}
\put(3.4,0.9){\makebox(0,0){\{1,2,3\}}}
\put(3.4,-0.1){\makebox(0,0){\{1,2\}}}
\put(3.5,-1.1){\makebox(0,0){\{1\}}} \put(7,2.3){\makebox(0,0){V}}
\put(5.4,0.9){\makebox(0,0){\{2,3,4\}}}
\put(5.4,-0.1){\makebox(0,0){\{2,3\}}}
\put(5.5,-1.1){\makebox(0,0){\{2\}}}
\put(7.4,0.9){\makebox(0,0){\{1,3,4\}}}
\put(7.4,-0.1){\makebox(0,0){\{3,4\}}}
\put(7.5,-1.1){\makebox(0,0){\{3\}}}
\put(10.6,0.9){\makebox(0,0){\{1,2,4\}}}
\put(10.5,-0.1){\makebox(0,0){\{1,4\}}}
\put(10.4,-1.1){\makebox(0,0){\{4\}}}
\put(7,-2.3){\makebox(0,0){$\varnothing $}}
\put(7,-3){\makebox(0,0){(b)}}
\end{picture}
\caption{$G=(V,E)$ (a) and a family of connected subgraphs (b).}%
\label{fig}%
\end{figure}
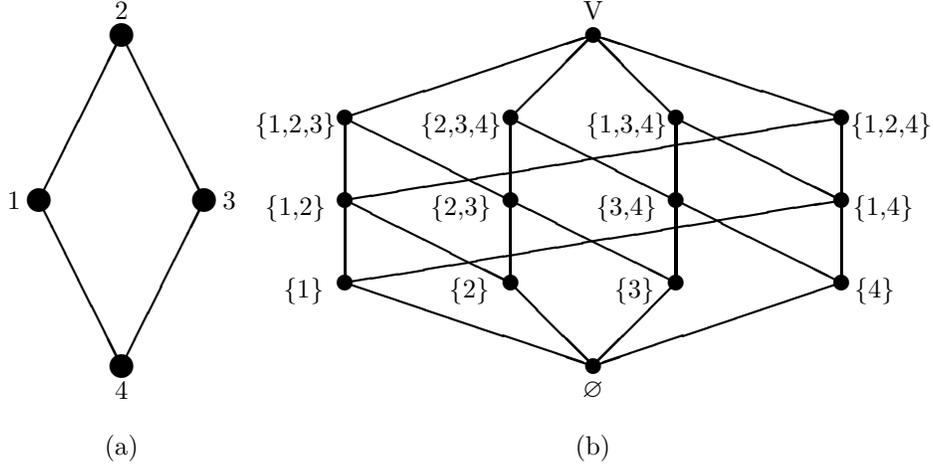

To show that $(V,\mathcal{S})$ is a chain system consider some $A,B\in
\mathcal{S}$ such that $A\subset B$. We are to prove that there exists an
$b\in B-A$ such that $A\cup b\in\mathcal{S}$. Since $B$ is a connected
subgraph, there is an edge $e=(a,b)$, where $a\in A$ and $b\in B-A$. Hence,
$A\cup b\in\mathcal{S}$.

For a set $X\in\mathcal{F}$, let $ex(X)=\{x\in X:X-x\in\mathcal{F}\}$ be the
set of \textit{extreme points }of $X$. Originally, this operator was defined
for closure spaces \cite{Edelman}. Our definition does not demand the existing
of a closure operator, but when the set system $(E,\mathcal{F})$ is a convex
geometry $ex(X)$ becomes the classical set of extreme points of a convex set
$X$.

Note, that accessibility means that for each non-empty $X\in\mathcal{F},$
$ex(X)\neq\varnothing$.

\begin{definition}
The operator $ex:\mathcal{F}\rightarrow2^{E}$ satisfies the \textit{heritage}
property if$\ X\subseteq Y$\ implies\ $ex(Y)\cap X\subseteq ex(X)$
for\ all\ $X,Y\in\mathcal{F}$.
\end{definition}

We choose the name \textit{heritage property} following B.Monjardet
\cite{Monjardet}. This condition is well-known in the theory of choice
functions where one uses also alternative terms like \textit{Chernoff
condition }\cite{Chernoff} or \textit{property }$\alpha$ \cite{Sen}. This
property is also known in the form $X-ex(X)\subseteq Y-ex(Y)$.

The heritage property means that $Y-x\in\mathcal{F}$ implies $X-x\in
\mathcal{F}$ for all $X,Y\in\mathcal{F}$ with $X\subseteq Y$ and for all $x\in
X$.

The extreme point operator of a closure space satisfies the heritage property,
but the opposite statement in not correct. Indeed, consider the following
example illustrated in Figure \ref{fig2} (a): let $E=\{1,2,3,4\}$ and
\[
\mathcal{F}=\{\varnothing
,\{1\},\{2\},\{3\},\{1,2\},\{2,4\},\{3,4\},\{1,2,3\},E\}\text{.}%
\]
It is easy to check that the extreme point operator $ex$ satisfies the
heritage property, but the set system $(E,\mathcal{F})$ is not a closure space
($\{2,4\}\cap\{3,4\}\notin\mathcal{F}$). It may be mentioned that this set
system does not satisfy the chain property. Another example (Figure \ref{fig2}
(b)) shows that the chain property is also not enough for a set system to be a
closure space. Here
\[
\mathcal{F}=\{\varnothing
,\{1\},\{4\},\{1,3\},\{3,4\},\{1,2,3\},\{2,3,4\},E\},
\]
and the constructed set system satisfies the chain property, but is not a
closure set ($\{1,3\}\cap\{3,4\}\notin\mathcal{F}$).

\begin{figure}[ptbh]
\setlength{\unitlength}{1.2cm}
\par
\begin{picture}(7,5)(0,-2.9)\thicklines
\multiput(0.5,0)(2,0){3}{\circle*{0.2}}
\multiput(0.5,-1)(2,0){3}{\circle*{0.2}} \put(2.5,-2){\circle*{0.2}}
\put(2.5,1){\circle*{0.2}} \put(2.5,2){\circle*{0.2}}
\put(0.2,0.3){\makebox(0,0){\{1,2\}}}
\put(3,0){\makebox(0,0){\{2,4\}}} \put(5,0){\makebox(0,0){\{3,4\}}}
\put(3.1,1){\makebox(0,0){\{1,2,3\}}}
\put(0.1,-1){\makebox(0,0){\{1\}}}
\put(2.9,-1){\makebox(0,0){\{2\}}} \put(5,-1){\makebox(0,0){\{3\}}}
\put(2.8,2){\makebox(0,0){E}}
\put(2.5,-2.3){\makebox(0,0){$\varnothing $}}
\multiput(0.5,-1)(2,0){3}{\line(0,1){1}} \put(0.5,0){\line(2,1){2}}
\put(0.5,0){\line(2,-1){2}} \put(0.5,-1){\line(2,-1){2}}
\put(2.5,-2){\line(0,1){1}} \put(2.5,-2){\line(2,1){2}}
\put(2.5,1){\line(0,1){1}} \put(2.5,-3){\makebox(0,0){(a)}}
\multiput(7,0)(2,0){2}{\circle*{0.2}}
\multiput(7,1)(2,0){2}{\circle*{0.2}}
\multiput(7,-1)(2,0){2}{\circle*{0.2}} \put(8,2){\circle*{0.2}}
\put(8,-2){\circle*{0.2}} \multiput(7,-1)(2,0){2}{\line(0,1){2}}
\put(7,-1){\line(1,-1){1}} \put(8,-2){\line(1,1){1}}
\put(7,1){\line(1,1){1}} \put(8,2){\line(1,-1){1}}
\put(6.4,1){\makebox(0,0){\{1,2,3\}}}
\put(6.5,0){\makebox(0,0){\{1,3\}}}
\put(6.5,-1){\makebox(0,0){\{1\}}} \put(8,2.3){\makebox(0,0){E}}
\put(9.6,1){\makebox(0,0){\{2,3,4\}}}
\put(9.5,0){\makebox(0,0){\{3,4\}}}
\put(9.5,-1){\makebox(0,0){\{4\}}}
\put(8,-2.3){\makebox(0,0){$\varnothing $}}
\put(8,-3){\makebox(0,0){(b)}}
\end{picture}
\caption{Heritage property (a) and chain property (b).}%
\label{fig2}%
\end{figure}
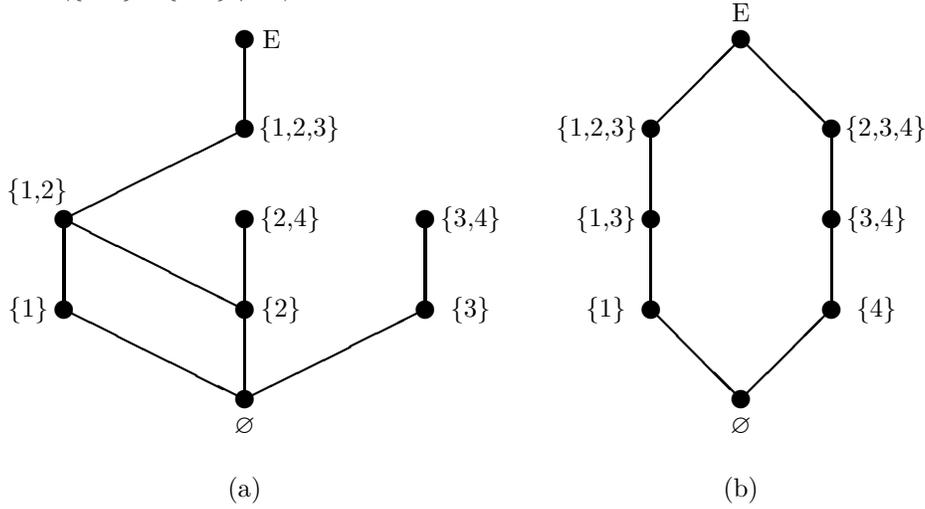

\begin{proposition}
\label{Pr_chain_convex}A set system $(E,\mathcal{F})$ is a convex geometry if
and only if

(1) $\varnothing\in\mathcal{F}$, $E\in\mathcal{F}$

(2) the set system $(E,\mathcal{F})$ satisfies the chain property

(3) the extreme point operator $ex$ satisfies the heritage property.
\end{proposition}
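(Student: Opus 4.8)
The plan is to prove the two implications by handling, one at a time, the three features that define a convex geometry here: $\varnothing ,E\in \mathcal{F}$, closure under intersection, and up-accessibility.

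Assume first that $(E,\mathcal{F})$ is a convex geometry. Then (1) is immediate. For (3), the extreme point operator of an intersection-closed family always respects heritage: if $X\subseteq Y$ in $\mathcal{F}$, $x\in X$, and $Y-x\in \mathcal{F}$, then $X-x=X\cap (Y-x)\in \mathcal{F}$. For (2), take $X\subset Y$ in $\mathcal{F}$; using up-accessibility I would build a feasible chain $X=X_{0}\subset X_{1}\subset \cdots \subset X_{q}=E$ and intersect it termwise with $Y$. Closure under intersection makes every $X_{i}\cap Y$ feasible, and consecutive terms $X_{i}\cap Y$ differ by at most one element; deleting repetitions yields a feasible chain from $X$ to $Y$ with unit steps, so its penultimate term is $Y-y$ for some $y\in Y-X$, which is exactly the chain property.

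For the converse, assume (1), (2), (3); the real content is deriving closure under intersection, since up-accessibility is immediate from (2) applied to $X\subset E$ (in its equivalent ``$X\cup y\in \mathcal{F}$'' form). The engine is the lemma: \emph{if }$A,B\in \mathcal{F}$\emph{ and }$A\not\subseteq B$\emph{, then }$A-a\in \mathcal{F}$\emph{ for some }$a\in A-B$. Granting this, $X\cap Y\in \mathcal{F}$ follows by induction on $|X-Y|$: when $X\subseteq Y$ it is trivial, and otherwise pick $a\in X-Y$ with $X-a\in \mathcal{F}$, note that $(X-a)\cap Y=X\cap Y$ while $|(X-a)-Y|<|X-Y|$, and recurse. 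To prove the lemma I would suppose for contradiction that no such $a$ exists, i.e.\ $ex(A)\subseteq B$. By (2), extend $B$ to a feasible chain $B=Z_{0}\subset Z_{1}\subset \cdots \subset Z_{p}=E$ with $Z_{i}=Z_{i-1}\cup x_{i}$, and let $m$ be the largest index such that $x_{m}\in A-B$ (it exists since $A\not\subseteq B$). Every element of $A-B$ is then among $x_{1},\dots ,x_{m}$, so $A\subseteq Z_{m}$; moreover $Z_{m}-x_{m}=Z_{m-1}\in \mathcal{F}$ and $x_{m}\in A$, so the heritage property applied to $A\subseteq Z_{m}$ gives $A-x_{m}\in \mathcal{F}$, i.e.\ $x_{m}\in ex(A)\subseteq B$, contradicting $x_{m}\in A-B$.

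The main obstacle is precisely this lemma. The chain property is non-constructive in the downward direction -- it only tells us that \emph{some} element is removable from the larger of two nested feasible sets -- so one cannot simply strip a chosen element off $A$. The idea is to climb out of $B$ toward $E$ until all of $A-B$ has been absorbed, producing a feasible superset $Z_{m}$ of $A$ from which a specific element of $A-B$ genuinely can be deleted, and then to transport that deletion back down to $A$ via heritage. Once the lemma is available, the rest is routine: constructing the feasible chains, verifying that the two stated forms of the chain property coincide, and observing that by (1) the family $\mathcal{F}$ together with the just-proved intersection-closedness is a closure space, which with up-accessibility makes it a convex geometry.
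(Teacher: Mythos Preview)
Your proof is correct and follows essentially the same approach as the paper: both directions hinge on building a feasible chain from one of the two sets up to $E$ and using closure under intersection (forward) or the heritage property (converse) to extract a removable element lying in the difference of the two sets. Your packaging is slightly more explicit---you isolate the key lemma and phrase the index as ``largest $m$ with $x_m\in A-B$'' rather than the paper's equivalent ``least $j$ with $X_j\supseteq Y$'', and you spell out the induction on $|X-Y|$ where the paper simply says ``continuing the process''---but the underlying argument is identical.
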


\begin{proof}
Let a set system $(E,\mathcal{F})$ be a convex geometry. Then the first
condition automatically follows from the convex geometry definition. Prove the
second condition. Consider $X,Y\in\mathcal{F}$, and $X\subset Y$. From
(\ref{up_chain}) follows that there is a chain
\[
X=X_{0}\subset X_{1}\subset...\subset X_{k}=E
\]
such that $X_{i}=X_{i-1}\cup x_{i\text{ }}$and
$X_{i}\in\mathcal{F}$ for $0\leq i\leq k$. Let $j$ be the least integer for
which $X_{j}\supseteq Y$. Then $X_{j-1}\nsupseteq Y$, and $x_{j}\in Y$. Thus,
$Y-x_{j}=Y\cap X_{j-1}\in\mathcal{F}$. Since $x_{j}\notin X$, the chain
property is proved. To prove that $ex(Y)\cap X\subseteq ex(X)$, consider $p\in
ex(Y)\cap X$, then $Y-p\in\mathcal{F}$ and $X\cap(Y-p)=X-p\in\mathcal{F}$,
i.e., $p\in ex(X)$.

Conversely, let us prove that the set system $(E,\mathcal{F})$ is a convex
geometry. We are to prove both up-accessibility and that $X,Y\in\mathcal{F}$
implies $X\cap Y\in\mathcal{F}$. Since $E\in\mathcal{F}$, up-accessibility
follows from the chain property.

Consider $X,Y\in\mathcal{F}$. Since $E\in\mathcal{F}$, the chain property implies that there is a chain
\[
X=X_{0}\subset X_{1}\subset...\subset X_{k}=E
\]
such that $X_{i}=X_{i-1}\cup
x_{i\text{ }}$and $X_{i}\in\mathcal{F}$ for $0\leq i\leq k$. If $j$ is the
least integer for which $X_{j}\supseteq Y$, then $X_{j-1}\nsupseteq Y$, and
$x_{j}\in Y$. Since $x_{j}\in ex(X_{j})$, we obtain $x_{j}\in ex(Y)$.
Continuing the process of clearing $Y$ from the elements that are absent in
$X$, eventually we reach the set $X\cap Y\in\mathcal{F}$.
\end{proof}

\section{Main results}

In this section we consider relationship between quasi-concave set functions
and monotone linkage functions.

Monotone linkage functions were introduced by Joseph Mullat \cite{Mullat}.

A function $\pi:E\times2^{E}\rightarrow\mathbf{R}$ is called a
\textit{monotone linkage function} if
\begin{equation}
X\subseteq Y\ \text{implies}\ \pi(x,X)\leq\pi(x,Y)\text{,\ for\ each}%
\ X,Y\subseteq E\ \text{and}\ x\in E\text{.} \label{three}%
\end{equation}

Consider function $F:(2^{E}-\{E\})\rightarrow\mathbf{R}$ defined as follows%
\begin{equation}
F(X)=\min_{x\in X}\pi(x,X)\text{.} \label{four}%
\end{equation}

\begin{example}
\label{ex1} Consider a graph $G=(V,E)$, where $V$ is a set of vertices and $E
$ is a set of edges. Let $\deg_{H}(x)$ denote the degree of vertex $x$ in the
induced subgraph $H\subseteq G$. It is easy to see that function $\pi(x,H)$ =
$\deg_{H}(x)$ is monotone linkage function and function $F(H)$ returns the
minimal degree of subgraph $H$.
\end{example}

\begin{example}
\label{ex2} Consider a proximity graph $G=(V,E,W)$, where $w_{ij}$ represents
the degree of similarity of objects $i$ and $j$. A higher value of $w_{ij}$
reflects a higher similarity of objects $i$ and $j$. Define a monotone linkage
function $\pi(i,H)=\underset{j\in H}{\sum}w_{ij}$, that measures proximity
between subset $H\subseteq V$ and their element $i$. Then the function
$F(H)=\underset{i\in H}{\min}\pi(i,H)$ can be interpreted as a measure of
density\emph{ }of set $H$.
\end{example}

It was shown \cite{Malishevski}, that for every monotone linkage function
$\pi$, function $F$ is quasi-concave on the Boolean $2^{E}$. Moreover, each
quasi-concave function may be defined by a monotone linkage function. In this
section we investigate this relation on different families of sets.

For each function $F$ defined on a set system $(E,\mathcal{F})$, we can
construct the corresponding linkage function
\begin{equation}
\pi_{F}(x,X)=\left\{
\begin{array}
[c]{ll}%
\underset{A\in\lbrack x,X]_{\mathcal{F}}}{\max}F(A), & x\in X\ \text{and}%
\ [x,X]_{\mathcal{F}}\neq\varnothing\\
\underset{A\in\mathcal{F}}{\min}F(A), & \text{otherwise}%
\end{array}
\right.  . \label{pi_F}%
\end{equation}

where $[x,X]_{\mathcal{F}}=\{A\in\mathcal{F}:x\in A\ and\ A\subseteq X\}$.

\begin{proposition}
$\pi_{F}$ is monotone.
\end{proposition}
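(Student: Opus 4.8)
The plan is to verify the defining inequality \eqref{three} for $\pi_F$ directly from the definition \eqref{pi_F}, by fixing $x \in E$ and two sets $X \subseteq Y \subseteq E$ and showing $\pi_F(x,X) \le \pi_F(x,Y)$. The natural approach is to split into cases according to which branch of the piecewise definition applies to each of $\pi_F(x,X)$ and $\pi_F(x,Y)$.

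First I would handle the easy cases. If $x \notin X$, then $\pi_F(x,X) = \min_{A \in \mathcal{F}} F(A)$, which is the global minimum of $F$ over $\mathcal{F}$, hence $\le \pi_F(x,Y)$ regardless of which branch computes $\pi_F(x,Y)$ (in the ``otherwise'' branch they are equal, and in the max branch $\pi_F(x,Y)$ is a maximum of values $F(A)$ with $A \in \mathcal{F}$, each of which is $\ge \min_{A\in\mathcal{F}} F(A)$). The remaining subtlety is when $x \in X$ but $[x,X]_{\mathcal{F}} = \varnothing$, so again $\pi_F(x,X) = \min_{A\in\mathcal{F}} F(A)$ and the same argument applies. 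So the only genuine case is $x \in X$ (hence $x \in Y$) with $[x,X]_{\mathcal{F}} \ne \varnothing$.

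In that main case, the key observation is the inclusion $[x,X]_{\mathcal{F}} \subseteq [x,Y]_{\mathcal{F}}$: any $A \in \mathcal{F}$ with $x \in A$ and $A \subseteq X$ also satisfies $A \subseteq Y$ since $X \subseteq Y$. In particular $[x,Y]_{\mathcal{F}} \ne \varnothing$ as well, so $\pi_F(x,Y) = \max_{A \in [x,Y]_{\mathcal{F}}} F(A)$. Taking a maximum over a larger index set can only increase the value, so $\pi_F(x,X) = \max_{A \in [x,X]_{\mathcal{F}}} F(A) \le \max_{A \in [x,Y]_{\mathcal{F}}} F(A) = \pi_F(x,Y)$, as required.

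This proposition is essentially bookkeeping: there is no real obstacle, since monotonicity of $\pi_F$ is built into its definition as a maximum over the ``interval'' $[x,X]_{\mathcal{F}}$, which grows monotonically in $X$. The only thing to be careful about is making sure every combination of branches is covered — in particular that one cannot have $[x,X]_{\mathcal{F}} \ne \varnothing$ while $[x,Y]_{\mathcal{F}} = \varnothing$, which the inclusion rules out — and that the ``otherwise'' value $\min_{A\in\mathcal{F}} F(A)$ is genuinely a lower bound for every value that the max branch can produce. I would present the proof as the two-line case split above, with the main case occupying one short displayed chain of inequalities.
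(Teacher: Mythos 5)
Your proof is correct and follows essentially the same route as the paper's: the key step in both is the inclusion $[x,X]_{\mathcal{F}}\subseteq[x,Y]_{\mathcal{F}}$ making the maximum nondecreasing, together with the observation that the ``otherwise'' value $\min_{A\in\mathcal{F}}F(A)$ is a lower bound for every possible value of $\pi_F$. If anything, your write-up is slightly more careful than the paper's, which dismisses the remaining branch combinations with ``it is easy to verify the remaining cases.''
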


\begin{proof}
Indeed, if $x\in X$ and $[x,X]_{\mathcal{F}}\neq\varnothing$, then $X\subseteq
Y$ implies $[x,Y]_{\mathcal{F}}\neq\varnothing$ and
\[
\pi_{F}(x,X)=\underset{A\in\lbrack x,X]_{\mathcal{F}}}{\max}F(A)\leq
\underset{A\in\lbrack x,Y]_{\mathcal{F}}}{\max}F(A)=\pi_{F}(x,Y).
\]

If $x\in X$ and $[x,X]_{\mathcal{F}}=\varnothing$, then $X\subseteq Y$ implies
$\pi_{F}(x,X)=\underset{A\in\mathcal{F}}{\min}F(A)\leq\pi_{F}(x,Y)$. It is
easy to verify the remaining cases.
\end{proof}

Let $(E,\mathcal{F})$ be an accessible set system. Denote $\mathcal{F}%
^{+}=\mathcal{F}-\varnothing$. Then, having the linkage function $\pi_{F}$, we
can construct for all $X\in\mathcal{F}^{+}$ the set function
\begin{equation}
G_{F}(X)=\underset{x\in ex(X)}{\min}\pi_{F}(x,X). \label{G_pi}%
\end{equation}

Now consider the relationship between two set functions $F$ and $G_{F}$.

\begin{proposition}
\label{PrGF}If $(E,\mathcal{F})$ is an accessible set system, then
\[
G_{F}(X)\geq F(X),\ for\ each\ X\in\mathcal{F}^{+}.
\]

\end{proposition}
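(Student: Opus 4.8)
The plan is to show that for each $X \in \mathcal{F}^{+}$ and each extreme point $x \in ex(X)$, the linkage value $\pi_{F}(x,X)$ is at least $F(X)$; the inequality $G_{F}(X) = \min_{x \in ex(X)} \pi_{F}(x,X) \geq F(X)$ then follows by taking the minimum over $ex(X)$. Since $(E,\mathcal{F})$ is accessible and $X$ is non-empty, $ex(X) \neq \varnothing$, so $G_{F}$ is genuinely defined at $X$ and the minimum is over a non-empty set.

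Fix $x \in ex(X)$. The key observation is that $X$ itself belongs to the interval $[x,X]_{\mathcal{F}} = \{A \in \mathcal{F} : x \in A \text{ and } A \subseteq X\}$: indeed $X \in \mathcal{F}$, $x \in X$, and $X \subseteq X$. In particular $[x,X]_{\mathcal{F}} \neq \varnothing$, so the first branch of the definition (\ref{pi_F}) applies, giving
\[
\pi_{F}(x,X) = \max_{A \in [x,X]_{\mathcal{F}}} F(A) \geq F(X),
\]
because $X$ is one of the sets $A$ over which the maximum is taken.

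Since this holds for every $x \in ex(X)$, we conclude
\[
G_{F}(X) = \min_{x \in ex(X)} \pi_{F}(x,X) \geq F(X),
\]
as claimed. I do not anticipate a real obstacle here: the statement is essentially unwound from the definitions, the only thing to check being that $ex(X) \neq \varnothing$ (which is exactly what accessibility buys us for non-empty $X$) so that the minimum in (\ref{G_pi}) is well defined, and that $X$ always lies in its own interval $[x,X]_{\mathcal{F}}$ so that the relevant branch of $\pi_{F}$ is the $\max$ branch rather than the constant $\min_{A \in \mathcal{F}} F(A)$ branch. The reverse inequality $G_F(X) \leq F(X)$, which would be needed for equality, is presumably false in general for merely accessible systems and is where the additional hypotheses (chain property, heritage) will later come into play.
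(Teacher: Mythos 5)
Your proof is correct and is essentially the paper's argument: both reduce to the observation that $X\in[x,X]_{\mathcal{F}}$ for any $x\in ex(X)$, so $\pi_{F}(x,X)=\max_{A\in[x,X]_{\mathcal{F}}}F(A)\geq F(X)$, and then pass to the minimum over $ex(X)$ (the paper does this via the minimizer $x^{\ast}$, you do it pointwise first). Your explicit remarks that accessibility guarantees $ex(X)\neq\varnothing$ and that the $\max$ branch of (\ref{pi_F}) is the one in force are welcome details the paper leaves implicit.
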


\begin{proof}
Indeed,
\[
G_{F}(X)=\underset{x\in ex(X)}{\min}\pi_{F}(x,X)=\pi_{F}(x^{\ast}%
,X)=\underset{A\in\lbrack x,X]]_{\mathcal{F}}}{\max}F(A)\geq F(X),
\]

where $x^{\ast}\in\underset{x\in ex(X)}{\arg\min}\pi_{F}(x,X)$\footnote{$\arg
\min f(x)$ denote the set of arguments that minimize the function $f$.}.
\end{proof}

What conditions on the set system $(E,\mathcal{F})$ are to be satisfied to be
sure that $G_{F}$ coincides with $F$?

\begin{theorem}
Let $(E,\mathcal{F})$ be an accessible set system. Then for every
quasi-concave set function $F:\mathcal{F}^{+}\rightarrow\mathbf{R}$%
\[
G_{F}=F\ \text{on}\ \mathcal{F}^{+}%
\]
if and only if the set system $(E,\mathcal{F})$ satisfies the chain property.
\end{theorem}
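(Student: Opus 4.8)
The plan is to prove the two directions separately, using Proposition~\ref{PrGF} to reduce the "only if" part to producing a witness function that fails on a chain-property violation, and using an explicit chain argument for the "if" part.

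\textbf{The ``if'' direction.} Assume $(E,\mathcal{F})$ is accessible and satisfies the chain property, and let $F$ be quasi-concave on $\mathcal{F}^{+}$. By Proposition~\ref{PrGF} we already have $G_{F}(X)\geq F(X)$, so it suffices to show $G_{F}(X)\leq F(X)$ for each $X\in\mathcal{F}^{+}$. Fix such an $X$. Unwinding the definitions, $G_{F}(X)=\pi_{F}(x^{\ast},X)$ for some $x^{\ast}\in ex(X)$, and $\pi_{F}(x^{\ast},X)=\max\{F(A):A\in[x^{\ast},X]_{\mathcal{F}}\}=F(A^{\ast})$ for some $A^{\ast}\in\mathcal{F}$ with $x^{\ast}\in A^{\ast}\subseteq X$. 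The goal is thus $F(A^{\ast})\leq F(X)$. If $A^{\ast}=X$ we are done, so assume $A^{\ast}\subsetneq X$. Here I invoke the chain property on the pair $A^{\ast}\subset X$ to build a chain $A^{\ast}=Y_{0}\subset Y_{1}\subset\dots\subset Y_{m}=X$ with $Y_{i}=Y_{i-1}\cup y_{i}$, $y_{i}\in X-A^{\ast}$, and each $Y_{i}\in\mathcal{F}$. Now I want to climb this chain keeping the value of $F$ from dropping below $F(A^{\ast})$: at each step write $Y_{i}=Y_{i-1}\cup y_{i}$, and also consider the singleton-type feasible set or some small feasible set containing $y_{i}$ — but the cleaner route is to use quasi-concavity directly. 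Since $Y_{i}$ is a cover of $Y_{i-1}\cup\{y_{i}\}$? Not quite — $Y_{i}$ need not be a \emph{minimal} feasible superset. So instead the right move is: the chain property tells us each $Y_{i}$ is feasible, and we compare $F(Y_{i})$ with $F(Y_{i-1})$ by finding, via accessibility applied inside $Y_{i}$ together with the chain property on suitable pairs, a feasible set $Z_i\in\mathcal{C}(Y_{i-1}\cup y_i)$ sitting inside $Y_i$; since $F$ is quasi-concave, $F(Z_i)\ge\min\{F(Y_{i-1}),F(\{y_i\}\text{-ish set})\}$, and monotonicity of $\pi_F$ plus the structure of the chain lets this propagate. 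I expect the inductive comparison $F(Y_{i-1})\le F(X)$ knowing $F(Y_i)\le F(X)$ to actually be run \emph{downward} from $X$: it is easiest to show $F(A^\ast)\le F(X)$ by noting $A^\ast\in[x^\ast,X]_\mathcal F$ and that the chain exhibits $X$ as reachable, so quasi-concavity forces $F(X)\ge F(A^\ast)$ once we realize $X$ as a cover of a union involving $A^\ast$; the chain property guarantees the intermediate covers stay inside $X$.

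\textbf{The ``only if'' direction.} Assume $(E,\mathcal{F})$ is accessible but fails the chain property; I must construct a quasi-concave $F$ on $\mathcal{F}^{+}$ with $G_{F}\ne F$. By the failure, there are $A,B\in\mathcal{F}$ with $A\subsetneq B$ such that $A\cup y\notin\mathcal{F}$ for every $y\in B-A$; equivalently no feasible set strictly between $A$ and $B$ of size $|A|+1$ exists "above $A$", and more usefully $A$ has no feasible cover inside $B$ obtainable in one step. The idea is to define $F$ so that $F(A)$ is large but every feasible set properly containing $A$ and contained in $B$ (there may be none of size $|A|+1$) — actually the cleanest construction: set $F(A)=1$, $F(X)=0$ for all other $X\in\mathcal F^+$. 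I then check quasi-concavity: for $X,Y\in\mathcal F$ and $Z\in\mathcal C(X\cup Y)$ we need $F(Z)\ge\min\{F(X),F(Y)\}$; this can only be violated if $\min\{F(X),F(Y)\}=1$, i.e. $X=Y=A$, whence $X\cup Y=A$ and its unique cover is $A$ itself, giving $F(Z)=1$. So $F$ is quasi-concave. Now compute $G_F(B)$: we have $G_F(B)=\min_{x\in ex(B)}\pi_F(x,B)$, and for $x\in A\cap ex(B)$ (such $x$ exists because... here I need $ex(B)\cap A\ne\varnothing$, which follows if $A$ itself is reachable — use accessibility of $A$), $\pi_F(x,B)=\max_{A'\in[x,B]_\mathcal F}F(A')\ge F(A)=1$, while for $x\in ex(B)-A$ one gets $\pi_F(x,B)\ge 0$; so $G_F(B)\ge 0=F(B)$ but I actually need strict inequality, i.e. I must ensure $G_F(B)>0$, meaning \emph{every} extreme point of $B$ "sees" $A$. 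That forces a more careful choice. The fix is to pick $A,B$ minimally: take $B$ a minimal feasible set for which some feasible $A\subsetneq B$ has no one-step extension inside $B$, then argue $ex(B)\subseteq A$ — if some $x\in ex(B)-A$, then $B-x$ is feasible, $A\subseteq B-x$ (since $x\notin A$), and $A\subsetneq B-x$ (if $A=B-x$ then $B=A\cup x$ contradicts the violation), so $(A,B-x)$ is a smaller witness unless $A$ has a one-step extension inside $B-x$ — but such an extension lies in $B$ too, contradiction. Hence $ex(B)\subseteq A$, every $x\in ex(B)$ has $A\in[x,B]_\mathcal F$, so $\pi_F(x,B)\ge 1$ for all $x\in ex(B)$, giving $G_F(B)\ge 1>0=F(B)$.

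\textbf{Main obstacle.} The delicate point is the "only if" direction: naively one quasi-concave witness does not separate $F$ from $G_F$ because $G_F$ is a min over extreme points and a stray extreme point outside $A$ could pull the value down to $F(B)$. The real work is the minimality argument showing $ex(B)\subseteq A$ for a minimally-chosen chain-property violation; I expect that to be the crux, and one must be careful that "minimal witness" is well-defined (minimize $|B|$, then among those minimize $|B-A|$, or minimize $|B|-|A|$) and that the descent step $(A,B)\rightsquigarrow(A,B-x)$ genuinely produces a strictly smaller witness and does not accidentally create a one-step extension. On the "if" side the potential snag is that chain-property intermediate sets $Y_i$ are not covers, so quasi-concavity is not directly applicable step-by-step; one resolves this by combining the chain property with accessibility to locate genuine covers $Z_i\in\mathcal C(\cdot)$ nested inside the $Y_i$, or more slickly by a single application of quasi-concavity to $X$ viewed as (a cover refining) a union $A^\ast\cup\{\text{elements of }X-A^\ast\}$ whose cover sequence the chain property keeps inside $X$.
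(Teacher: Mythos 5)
Your ``only if'' direction is essentially sound: the witness function ($F=1$ on $A$, $0$ elsewhere) is exactly the paper's, and your minimality argument forcing $ex(B)\subseteq A$ is valid, though it is a detour. The paper negates its primary formulation of the chain property (``there exists $y\in Y-X$ with $Y-y\in\mathcal{F}$''), which immediately yields a pair $A\subset B$ with $ex(B)\subseteq A$ and no minimization is needed; you negated the equivalent ``add one element'' formulation, which is why you had to work to recover $ex(B)\subseteq A$. (You should also note, as a one-liner, that accessibility rules out $A=\varnothing$, so $F(A)$ is defined on $\mathcal{F}^{+}$.)

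The ``if'' direction, however, has a genuine gap, and you correctly sensed where: the intermediate sets $Y_i$ of a chain from $A^{\ast}$ to $X$ are not covers of anything useful, the singletons $\{y_i\}$ need not be feasible nor have controlled $F$-values, so quasi-concavity cannot be applied step by step, and none of the alternatives you sketch resolves this. The missing idea is to use \emph{all} extreme points at once rather than only the minimizing one: for each $x\in ex(X)$ pick $A^{x}\in\arg\max_{A\in[x,X]_{\mathcal{F}}}F(A)$, so that $G_{F}(X)=\min_{x\in ex(X)}F(A^{x})$, and apply quasi-concavity (iterated over the finite union) to any cover $Z\in\mathcal{C}\bigl(\bigcup_{x\in ex(X)}A^{x}\bigr)$ to get $F(Z)\geq\min_{x}F(A^{x})=G_{F}(X)$. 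The chain property then enters exactly once, to show that $X$ itself is such a cover: if some feasible $Y$ with $\bigcup_{x}A^{x}\subseteq Y\subsetneq X$ existed, the chain property would produce $y\in X-Y$ with $X-y\in\mathcal{F}$, i.e.\ $y\in ex(X)-Y$; but $Y\supseteq\bigcup_{x}A^{x}\supseteq ex(X)$, a contradiction. Hence $F(X)\geq G_{F}(X)$, and Proposition~\ref{PrGF} gives the reverse inequality. Your single-set version cannot be repaired into this, because $X$ is in general not a cover of $A^{\ast}$ alone, nor of $A^{\ast}\cup(X-A^{\ast})$ decomposed into feasible pieces with known $F$-values.
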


\begin{proof}
Assume that the set system $(E,\mathcal{F})$ satisfies the chain property. For
each $X\in\mathcal{F}^{+}$%
\[
G_{F}(X)=\underset{x\in ex(X)}{\min}\pi_{F}(x,X)=\underset{x\in ex(X)}{\min
}F(A^{x})\text{,}%
\]
where $A^{x}$ is a set from $[x,X]_{\mathcal{F}}$ on which the value of the
function $F$ is maximal, i.e.,%
\[
A^{x}\in\arg\max_{A\in\lbrack x,X]_{\mathcal{F}}}F(A).
\]

Consider $Z$ that is a cover of $\underset{x\in ex(X)}{\cup}A^{x}$, i.e.
$Z\in\mathcal{C}(\underset{x\in ex(X)}{\cup}A^{x})$. From quasi-concavity
(\ref{q_con}) it follows that $\underset{x\in ex(X)}{\min}F(A^{x})\leq F(Z)$.
So, $G_{F}(X)\leq F(Z)$ for each $Z\in\mathcal{C}(\underset{x\in ex(X)}{\cup
}A^{x})$. Now, to prove that $G_{F}=F$, it is enough to show that
$X\in\mathcal{C}(\underset{x\in ex(X)}{\cup}A^{x})$.

In fact, the stronger proposition is correct. If $(E,\mathcal{F})$ is an
accessible chain system, then for all $X\in\mathcal{F}$ and $B^{x}\in\lbrack
x,X]_{\mathcal{F}}$%
\begin{equation}
X\in\mathcal{C}(\underset{x\in ex(X)}{\cup}B^{x})\text{.} \label{GF_F}%
\end{equation}

For each $x\in ex(X)$, $X\supseteq B^{x}$, and then $X\supseteq\underset{x\in
ex(X)}{\cup}B^{x}$. Assume, that $X$ is not a cover of $\underset{x\in
ex(X)}{\cup}B^{x}$, i.e., there is a set $Y$, such that $Y\in\mathcal{C}%
(\underset{x\in ex(X)}{\cup}B^{x})$ and $X\supset Y$. Then from the chain
property it follows that there exists an element $y\in X-Y$ such that
$X-y\in\mathcal{F}$, i.e., there exists $y\in ex(X)$ and $y\notin Y$. On the
other hand,
\[
Y\in\mathcal{C}(\underset{x\in ex(X)}{\cup}B^{x})\Rightarrow Y\supseteq
\underset{x\in ex(X)}{\cup}B^{x}\supseteq ex(X)\text{,}%
\]
contradiction that proves (\ref{GF_F}). Therefore, $G_{F}(X)\leq F(X)$, and,
with (\ref{PrGF}), $F=G$.

Conversely, assume that the set system $(E,\mathcal{F})$ does not satisfy the
chain property. Since the set system $(E,\mathcal{F})$ is an accessible
system, it means that there exist $A,B\in\mathcal{F}$ such that $A\subset B$,
$A\neq\varnothing$ and there is not any $b\in B-A$ such that $B-b\in
\mathcal{F}$, i.e., $ex(B)\subseteq A$.

It is easy to see that the function%
\[
F(X)=\left\{
\begin{array}
[c]{ll}%
1, & X=A\\
0, & \text{otherwise}%
\end{array}
\right.  .
\]
is quasi-concave.

Consider the linkage function $\pi_{F}$. Since $x\in ex(B)$ implies $x\in A$,
then
\[
\pi_{F}(x,B)=\underset{X\in\lbrack x,B]_{\mathcal{F}}}{\max}F(X)=F(A)=1
\]
Thus, $G_{F}(B)=1$, i.e. $G_{F}\neq F$.
\end{proof}

Thus, we proved that on an accessible set system satisfying the chain property
each quasi-concave function $F$ determines a monotone linkage function
$\pi_{F}$, and a set function defined as a minimum of this monotone linkage
function $\pi_{F}$ coincides with the original function $F$.

As examples of such set system may be considered greedoids \cite{Greedoids}
that include matroids and antimatroids, and antigreedoids including convex
geometries. By an antigreedoid we mean a set system $(E,\mathcal{F})$ such
that the complementary set system $(E,\overline{\mathcal{F}})$ is a greedoid.

Note, that if $F$ is not quasi-concave, the function $G_{F}$ does not
necessarily equal $F$. For example, let $\mathcal{F}=\{\varnothing
,\{1\},\{2\},\{1,2\}\}$ and let
\[
F(X)=\left\{
\begin{array}
[c]{ll}%
0, & X=\{1,2\}\\
1, & \text{otherwise}%
\end{array}
\right.
\]

Function $F$ is not quasi-concave, since $F(\{1\}\cup\{2\})<\min
(F\{1\},F\{2\})$. It is easy to check that here $G_{F}\neq F$, because
$\pi_{F}(1,\{1,2\})=\pi_{F}(2,\{1,2\})=1$, and so $G_{F}(\{1,2\})=1$.
Moreover, the function $G_{F}$ is quasi-concave. To understand this
phenomenon, consider the opposite process.

Let $(E,\mathcal{F})$ be an accessible set system. We can construct the set
function $F_{\pi}:\mathcal{F}^{+}\rightarrow\mathbf{R}$ :
\begin{equation}
F_{\pi}(X)=\underset{x\in ex(X)}{\min}\pi(x,X), \label{F_def}%
\end{equation}

based on the monotone linkage function $\pi$ defined on $E\times2^{E}$.

To extend this function to the whole set system $(E,\mathcal{F})$ define
\[
F_{\pi}(\varnothing)=\underset{(x,X)}{\min}\pi(x,X).
\]

\begin{theorem}
\label{T-2}\smallskip Let $(E,\mathcal{F})$ be an accessible set system. Then
the following statements are equivalent

$(i)$ the extreme point operator $ex:\mathcal{F}\rightarrow2^{E}$ satisfies
the heritage property.

$(ii)$ for every monotone linkage function $\pi$ the function $F_{\pi}$ is quasi-concave.
\end{theorem}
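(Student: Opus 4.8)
The plan is to prove the equivalence $(i)\Leftrightarrow(ii)$ by proving each implication separately, with the direction $(i)\Rightarrow(ii)$ being the substantive one.

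\medskip
\noindent\textbf{Plan for $(i)\Rightarrow(ii)$.} Assume $ex$ satisfies the heritage property and fix a monotone linkage function $\pi$. Take $X,Y\in\mathcal{F}$ and a cover $Z\in\mathcal{C}(X\cup Y)$; I must show $F_\pi(Z)\geq\min\{F_\pi(X),F_\pi(Y)\}$. Since $F_\pi(Z)=\min_{z\in ex(Z)}\pi(z,Z)$, pick $z^\ast\in ex(Z)$ attaining this minimum, so $F_\pi(Z)=\pi(z^\ast,Z)$. The element $z^\ast$ lies in $Z\subseteq X\cup Y$, so without loss of generality $z^\ast\in X$. Now $Z$ is a cover of $X\cup Y$, hence $Z\supseteq X$; applying the heritage property to the pair $X\subseteq Z$ and the element $z^\ast\in X\cap ex(Z)$ gives $z^\ast\in ex(Z)\cap X\subseteq ex(X)$. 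Therefore $F_\pi(X)=\min_{x\in ex(X)}\pi(x,X)\leq\pi(z^\ast,X)$. Since $X\subseteq Z$, monotonicity of $\pi$ (property (\ref{three})) yields $\pi(z^\ast,X)\leq\pi(z^\ast,Z)=F_\pi(Z)$. Chaining these, $F_\pi(Z)\geq F_\pi(X)\geq\min\{F_\pi(X),F_\pi(Y)\}$, as required. (I would also check the boundary case where $Z=\varnothing$ or one of $X,Y$ is $\varnothing$ using the definition $F_\pi(\varnothing)=\min_{(x,X)}\pi(x,X)$, which is the global minimum and hence trivially dominated.)

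\medskip
\noindent\textbf{Plan for $(ii)\Rightarrow(i)$.} I argue by contraposition: suppose the heritage property fails, so there exist $X\subseteq Y$ in $\mathcal{F}$ and an element $p\in ex(Y)\cap X$ with $p\notin ex(X)$, i.e. $Y-p\in\mathcal{F}$ but $X-p\notin\mathcal{F}$. I will build a monotone linkage function $\pi$ for which $F_\pi$ is not quasi-concave, mimicking the counterexample style used in the proof of the previous theorem. The idea is to make $\pi(p,\cdot)$ small precisely on sets of size $|X|$ containing $p$ as an extreme-point obstruction while being large enough elsewhere, and to exhibit $X$ as a cover (or $X\cup Y$ situation) where the min over $ex$ jumps. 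Concretely, I expect to set $\pi(p,A)$ to depend monotonically on $A$ in a way that $\pi(p,X)$ is forced into $F_\pi(X)$ because $p\in ex(X)$ fails — wait, $p\notin ex(X)$, so $p$ does \emph{not} contribute to $F_\pi(X)$; the point is that $ex(X)$ may be ``too small,'' letting $F_\pi(X)$ be large, while for a suitable pair the cover forces a small value. I would choose $\pi$ so that $F_\pi(X)$ and $F_\pi(Y)$ are both large but some cover $Z$ of a union has $ex(Z)$ containing a low-$\pi$ element, violating (\ref{q_con}). Calibrating $\pi$ to be genuinely monotone while isolating this one discrepancy is the delicate part.

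\medskip
\noindent\textbf{Main obstacle.} The forward direction $(i)\Rightarrow(ii)$ is clean and I expect no trouble — it is essentially the heritage property plus monotonicity plied together. The real work is the converse: translating the combinatorial failure of heritage into a concrete monotone $\pi$ breaking quasi-concavity. The subtlety is that $F_\pi$ only ``sees'' $\pi$ through the extreme-point sets $ex(\cdot)$, so the counterexample must route the failure through a change in which elements are extreme between nested feasible sets; getting the monotonicity constraint (\ref{three}) to hold globally on $E\times 2^E$ while still producing the violation on $\mathcal{F}$ is where care is needed. I anticipate the cleanest construction assigns $\pi(x,A)$ a value like $|A|$ adjusted by a penalty tied to the specific pair witnessing the heritage failure, and then verifies (\ref{q_con}) fails for $X$ and $Y$ with cover containing $p$.
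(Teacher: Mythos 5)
Your forward direction has the right skeleton but the key containment is mis-justified. You write that $z^\ast$ lies in ``$Z\subseteq X\cup Y$''; in fact a cover satisfies $Z\supseteq X\cup Y$ (as you yourself use two lines later when you invoke $X\subseteq Z$), so this containment is backwards and gives you nothing. The fact you actually need is $ex(Z)\subseteq X\cup Y$, and it follows not from any inclusion of $Z$ in $X\cup Y$ but from the \emph{minimality} of the cover: if $z\in ex(Z)$ and $z\notin X\cup Y$, then $Z-z$ is a feasible set still containing $X\cup Y$, contradicting the minimality of $Z$. This is exactly the auxiliary statement (\ref{two}) in the paper's proof, and it is essential: for a non-minimal feasible superset $Z$ of $X\cup Y$ the minimizing extreme point need not lie in $X\cup Y$ and the inequality can fail. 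Once you replace the wrong containment by this observation, the rest of your chain ($z^\ast\in ex(X)$ by heritage, then $\pi(z^\ast,X)\le\pi(z^\ast,Z)$ by monotonicity of $\pi$) coincides with the paper's argument.

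The converse is where the real gap is: you never produce the counterexample. You correctly set up the contrapositive (there are $X\subset Y$ in $\mathcal{F}$ and $p\in X$ with $Y-p\in\mathcal{F}$ and $X-p\notin\mathcal{F}$), but then only speculate about penalties tied to $|A|$ without ever fixing $\pi$ or exhibiting the pair of feasible sets that violates (\ref{q_con}). The construction is much simpler than you anticipate: take $\pi(x,A)=1$ if $x=p$ and $2$ otherwise, which is constant in $A$ and hence trivially monotone. Then $F_\pi(Y)=1$ since $p\in ex(Y)$, while $F_\pi(X)=2$ since $p\notin ex(X)$, and $F_\pi(Y-p)=2$ since $p\notin Y-p$. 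The violating pair is $X$ and $Y-p$: their union is exactly $Y$, which is feasible and hence its own unique cover, so $F_\pi(Y)=1<2=\min\{F_\pi(X),F_\pi(Y-p)\}$. Identifying $X$ and $Y-p$ as the two sets whose union reconstitutes $Y$ is the one idea your sketch is missing; without it the contrapositive is not established.
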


\begin{proof}
Let the extreme point operator $ex$ satisfies the heritage property. To prove
that the function $F_{\pi}$ is a quasi-concave function on $\mathcal{F}$,
first note that
\begin{equation}
Z\in\mathcal{C}(X)\text{ implies }ex(Z)\subseteq X\ \text{for\ each}%
\ \text{nonempty}\ X\subseteq E\text{.} \label{two}%
\end{equation}
This statement immediately follows from the definition of a cover set.

Consider some $Z=\mathcal{C}(X\cup Y)$. Let $F_{\pi}(Z)=\underset{x\in
ex(Z)}{\min}\pi(x,Z)$. Then $F_{\pi}(Z)=\pi(x^{\ast},Z)$, where $x^{\ast}%
\in\underset{x\in ex(Z)}{\arg\min}\pi(x,Z)$. Then, by (\ref{two}), $x^{\ast
}\in X\cup Y$. \ Assume, without loss of generality, that $x^{\ast}\in X$.
Thus by the heritage property $x^{\ast}\in ex(X)$, because $x^{\ast}\in X$,
and $X\subseteq Z$, and $x^{\ast}\in ex(Z)$. Hence
\[
F_{\pi}(Z)=\pi(x^{\ast},Z)\geq\pi(x^{\ast},X)\geq\underset{x\in ex(X)}{\min
}\pi(x,X)=F_{\pi}(X)\geq\min\{F_{\pi}(X),F_{\pi}(Y)\}.
\]

Conversely, assume that the extreme point operator $ex$ does not satisfy the
heritage property, i.e., there exist $A,B\in\mathcal{F}$ such that $A\subset
B$, and there is $a\in A$ such that $B-a\in\mathcal{F}$ and $A-a\notin
\mathcal{F}$.

It is easy to check that the function%

\[
\pi(x,X)=\left\{
\begin{array}
[c]{ll}%
1, & x=a\\
2, & otherwise
\end{array}
\right.  .
\]
is monotone.

Then, $F_{\pi}(B)=1$, $F_{\pi}(A)=$ $F_{\pi}(B-a)=2$. Since $A\cup(B-a)=B $,
we have
\[
F_{\pi}(A\cup(B-a))<\min\{F_{\pi}(A),F_{\pi}(B-a)\}
\]
i.e., $F_{\pi}$ is not a quasi-concave function.
\end{proof}

Thus, if a set system $(E,\mathcal{F})$ is accessible and the operator $ex$
satisfies the heritage property, then for each set function $F$, defined on
$(E,\mathcal{F})$, one can build the quasi-concave set function $G_{F}$ that
is a upper bound of the original function $F$.

We show the corresponding property holds also for monotone linkage functions.

\begin{theorem}
\label{null_pi}Let $(E,\mathcal{F})$ be an accessible set system with the
operator $ex$ satisfying the heritage property, and let a function $F_{\pi}$
be defined as a minimum of a monotone linkage function $\pi$ by (\ref{F_def}),
then $\pi_{F}|_{\mathcal{F}}\leq\pi|_{\mathcal{F}}$, i.e., for all
$X\in\mathcal{F}$ and $x\in ex(X)\ $%
\[
\pi_{F}(x,X)\leq\pi(x,X),
\]
where $\pi_{F}$ is defined by (\ref{pi_F}).
\end{theorem}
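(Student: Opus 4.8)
The plan is to unwind the definition of $\pi_F$ in (\ref{pi_F}) and exhibit, for each $X\in\mathcal{F}$ and each $x\in ex(X)$, a single feasible set $A\in[x,X]_{\mathcal{F}}$ whose $F$-value already dominates $\pi_F(x,X)$, and then to bound $F(A)$ by $\pi(x,X)$ using the quasi-concavity-type estimate that $F=F_\pi$ enjoys via Theorems \ref{T-2} and the construction (\ref{F_def}). Concretely, fix $X\in\mathcal{F}$ and $x\in ex(X)$. Since $x\in X$, we have $X\in[x,X]_{\mathcal{F}}$, so $[x,X]_{\mathcal{F}}\neq\varnothing$ and the first branch of (\ref{pi_F}) applies: $\pi_F(x,X)=\max_{A\in[x,X]_{\mathcal{F}}}F(A)=F(A^{\ast})$ for some $A^{\ast}\in\mathcal{F}$ with $x\in A^{\ast}\subseteq X$. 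Here $F=F_\pi$ throughout, since the hypothesis is that $F$ is given by (\ref{F_def}); note $A^{\ast}\neq\varnothing$ because $x\in A^{\ast}$, so $F_\pi(A^{\ast})=\min_{z\in ex(A^{\ast})}\pi(z,A^{\ast})$ is computed by the main branch of (\ref{F_def}).

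Next I would locate the right extreme point of $A^{\ast}$. Since $A^{\ast}$ is nonempty and $(E,\mathcal{F})$ is accessible, $ex(A^{\ast})\neq\varnothing$, and by definition $F(A^{\ast})=\pi(z^{\ast},A^{\ast})$ for some $z^{\ast}\in ex(A^{\ast})$. The key move is to show $z^{\ast}=x$, or failing that, to get the inequality another way. In fact we can arrange this directly: because $x\in ex(X)$, $x\in A^{\ast}$, and $A^{\ast}\subseteq X$, the heritage property gives $ex(X)\cap A^{\ast}\subseteq ex(A^{\ast})$, hence $x\in ex(A^{\ast})$. Therefore $F(A^{\ast})=\min_{z\in ex(A^{\ast})}\pi(z,A^{\ast})\leq\pi(x,A^{\ast})$. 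Finally, $A^{\ast}\subseteq X$ and monotonicity of $\pi$ (\ref{three}) yield $\pi(x,A^{\ast})\leq\pi(x,X)$. Chaining these, $\pi_F(x,X)=F(A^{\ast})\leq\pi(x,A^{\ast})\leq\pi(x,X)$, which is exactly the claimed $\pi_F(x,X)\leq\pi(x,X)$ for all $X\in\mathcal{F}$ and $x\in ex(X)$.

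The main thing to be careful about is not any deep obstacle but the bookkeeping of the two branches of (\ref{pi_F}) and the role of $\varnothing$: I must make sure $A^{\ast}$ is nonempty (it is, as $x\in A^{\ast}$) so that the minimum over $ex(A^{\ast})$ in (\ref{F_def}) is nonvacuous, and I must invoke accessibility precisely to guarantee $ex(A^{\ast})\neq\varnothing$. The heritage property is used exactly once, to push membership $x\in ex(X)$ down to $x\in ex(A^{\ast})$ along $A^{\ast}\subseteq X$; this is the linchpin of the argument and the reason the hypothesis on $ex$ is needed. Note that quasi-concavity of $F_\pi$ (Theorem \ref{T-2}) is not actually invoked in this direction — only monotonicity of $\pi$, accessibility, and heritage — but it is the conceptual reason the whole picture is consistent, and it explains why the reverse inequality $G_{F_\pi}\geq F_\pi$ of Proposition \ref{PrGF} together with this theorem will pin $G_{F_\pi}$ down.
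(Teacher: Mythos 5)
Your proof is correct and follows essentially the same route as the paper: pick the maximizer $A^{*}\in[x,X]_{\mathcal{F}}$ realizing $\pi_F(x,X)$, use the heritage property to conclude $x\in ex(A^{*})$ so that $F(A^{*})\leq\pi(x,A^{*})$, and finish with monotonicity of $\pi$. Your extra bookkeeping about the branches of (\ref{pi_F}) and the nonemptiness of $ex(A^{*})$ is sound but adds nothing beyond the paper's argument.
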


\begin{proof}
For all $X\in\mathcal{F}$ and $x\in ex(X)$%
\[
\pi_{F}(x,X)=\underset{A\in\lbrack x,X]_{\mathcal{F}}}{\max}F(A)=F(A^{x}%
)=\underset{a\in ex(A^{\ast})}{\min}\pi(a,A^{x})\leq\pi(x,A^{x}),
\]
where $A^{x}\in\underset{A\in\lbrack x,X]_{\mathcal{F}}}{\arg\max}F(A)$.

The last inequality follows from the heritage property. Indeed, $X\supseteq
A^{x}$ and $x\in A^{x}$, then $x\in ex(X)$ implies $x\in ex(A^{x})$.

Now, from monotonicity of the function $\pi$ we have $\pi(x,A^{x})\leq
\pi(x,X)$, that finishes the proof.
\end{proof}

Consider the following example to see that the functions $\pi$ and $\pi_{F} $
can be not equal. Let $E=\{1,2\}$, $\mathcal{F}=2^{E}$.
\[
\pi(x,X)=\left\{
\begin{array}
[c]{ll}%
2, & x=2\mathit{\ }\text{and }X=\{1,2\}\\
1, & \text{otherwise}%
\end{array}
\right.
\]
then the function $F(X)=\underset{x\in ex(X)}{\min}\pi(x,X)$ is equal to $1 $
for all $X\subset E$, and then $\pi_{F}$ is equal for $1$ for each pair
$(x,X)\in E\times2^{E}$, i.e., $\pi_{F}$ $\neq\pi$.

Define more exactly the structure of the set of monotone linkage functions.

\begin{theorem}
Let $(E,\mathcal{F})$ be an accessible set system, and let $\pi_{1\text{ }}%
$and $\pi_{2}$ define (by (\ref{F_def})) the same set function $F$ on
$\mathcal{F}$. Then the function
\[
\pi=\min\{\pi_{1\text{ }},\pi_{2}\}
\]
is a monotone linkage function determines the same function $F$ on
$\mathcal{F}$.
\end{theorem}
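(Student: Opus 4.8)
The plan is to verify two things: first that $\pi = \min\{\pi_1,\pi_2\}$ is a monotone linkage function, and second that the set function it defines by (\ref{F_def}) is again $F$. Monotonicity is immediate and routine: if $X\subseteq Y$, then $\pi_i(x,X)\leq\pi_i(x,Y)$ for $i=1,2$ by (\ref{three}), so taking the minimum over $i$ on each side gives $\pi(x,X)\leq\pi(x,Y)$; the pointwise minimum of monotone functions is monotone. So the substance of the statement is the claim about the induced set function.

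Write $F' $ for the set function induced by $\pi$ via (\ref{F_def}), i.e. $F'(X)=\min_{x\in ex(X)}\pi(x,X)$ for $X\in\mathcal{F}^+$. For each fixed $X\in\mathcal{F}^+$ and each $x\in ex(X)$ we have $\pi(x,X)=\min\{\pi_1(x,X),\pi_2(x,X)\}\leq\pi_i(x,X)$ for both $i$, hence
\[
F'(X)=\min_{x\in ex(X)}\pi(x,X)\leq\min_{x\in ex(X)}\pi_i(x,X)=F(X)
\]
for $i=1,2$, giving $F'\leq F$. For the reverse inequality, fix $X\in\mathcal{F}^+$ and pick $x^\ast\in ex(X)$ achieving $F'(X)=\pi(x^\ast,X)$. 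Since $\pi(x^\ast,X)=\min\{\pi_1(x^\ast,X),\pi_2(x^\ast,X)\}$, this equals $\pi_j(x^\ast,X)$ for some $j\in\{1,2\}$. But $x^\ast\in ex(X)$, so $\pi_j(x^\ast,X)\geq\min_{x\in ex(X)}\pi_j(x,X)=F(X)$. Therefore $F'(X)=\pi_j(x^\ast,X)\geq F(X)$. Combining the two inequalities yields $F'(X)=F(X)$ for all $X\in\mathcal{F}^+$, and since both functions are extended to $\varnothing$ in the same way (as the global minimum of the linkage function, which for $\pi$ is the minimum of the global minima of $\pi_1$ and $\pi_2$, matching the common value $F(\varnothing)$), we get $F'=F$ on all of $\mathcal{F}$.

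I do not expect a genuine obstacle here: the argument is just the observation that a minimum-of-minimums can be reorganized, together with the fact that the minimizing index $j$ at the optimal extreme point $x^\ast$ lets us compare $\pi(x^\ast,X)$ back to the whole function $F$. The only point that needs a word of care is the treatment of $\varnothing$: one must check that the extension conventions for $\pi$, $\pi_1$, $\pi_2$ are consistent, which follows because $\min_{(x,X)}\min\{\pi_1(x,X),\pi_2(x,X)\}=\min\{\min_{(x,X)}\pi_1(x,X),\min_{(x,X)}\pi_2(x,X)\}$ and both $\pi_1,\pi_2$ induce the same $F(\varnothing)$. Accessibility is used only to guarantee $ex(X)\neq\varnothing$ for each nonempty $X$, so that $F'(X)$ is well defined; heritage is not needed for this particular statement.
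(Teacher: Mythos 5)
Your proposal is correct and follows essentially the same route as the paper's own proof: monotonicity of the pointwise minimum, the inequality $F'\leq F$ by comparing $\pi$ to each $\pi_i$, and the reverse inequality by selecting the index $j$ achieving the minimum at the optimal extreme point $x^{\ast}$. Your extra remark about the consistency of the extension to $\varnothing$ is a harmless addition that the paper omits.
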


\begin{proof}
At first, prove that $\pi$ is a monotone linkage function. Indeed, consider a
pair $X\subseteq Y$. Without loss of generality we have%
\[
\pi(x,Y)=\min\{\pi_{1}(x,Y),\pi_{2}(x,Y)\}=\pi_{1}(x,Y).
\]
Then, from monotonicity,%
\[
\pi_{1}(x,Y)\geq\pi_{1}(x,X)\geq\min\{\pi_{1}(x,X),\pi_{2}(x,X)\}=\pi(x,X)
\]

Now, denote $G(X)=\underset{x\in ex(X)}{\min}\pi(x,X)$ and prove that $G=F$.

We have%
\[
G(X)=\underset{x\in ex(X)}{\min}\pi(x,X)=\pi(x^{\ast},X)=\min\{\pi_{1}%
(x^{\ast},X),\pi_{2}(x^{\ast},X)\},
\]
where $x^{\ast}\in\underset{x\in ex(X)}{\arg\min}\pi(x,X)$. Without loss of
generality we have%
\[
G(X)=\pi_{1}(x^{\ast},X)\geq\underset{x\in ex(X)}{\min}\pi_{1}(x,X)=F(X).
\]

On the other hand,
\[
F(X)=\underset{x\in ex(X)}{\min}\pi_{1}(x,X)=\pi_{1}(x^{\#},X)\geq\pi
(x^{\#},X)\geq\underset{x\in ex(X)}{\min}\pi(x,X)=G(X).
\]

\end{proof}

Thus, the set of monotone linkage functions, defined by the set function $F$
on an accessible set system, forms a semilattice with the lattice operation
\[
\pi_{1}\wedge\pi_{2}=\min\{\pi_{1\text{ }},\pi_{2}\},
\]
where the function $\pi_{F}$ is a null of this semilattice (follows from
Theorem \ref{null_pi}).

\section{Conclusion}

Some aspects of duality between quasi-concave set functions and monotone
linkage functions were discussed for convex geometries, and more generally,
for chain systems.

Our findings may lead to efficient optimization procedures on more complex set
systems than just matroids and antimatroids.

\end{document}